\newtheorem{theorem}{Theorem}
\newtheorem{corollary}{Corollary}
\newtheorem{lemma}{Lemma}
\theoremstyle{definition}
\newtheorem{remark}{Remark}
\newcommand{\R}{\mathbb{R}}
\newcommand{\dd}{\mathop{}\!\mathrm{d}}
\newcommand{\set}[1]{\left\{#1\right\}}
\newcommand{\pd}{\partial}
\newcommand{\uS}{\mathbb{S}^{n-1}}
\newcommand{\beq}{\begin{equation}}
\newcommand{\eeq}{\end{equation}}
\newcommand{\beqs}{\begin{eqnarray*}}
\newcommand{\eeqs}{\end{eqnarray*}}
\newcommand{\beqn}{\begin{eqnarray}}
\newcommand{\eeqn}{\end{eqnarray}}
\begin{document}

\title{A flow method for a generalization of  $L_{p}$ Christofell-Minkowski problem}

\author{BoYa Li}
\address{BoYa Li: School of Mathematics and Statistics, Beijing Technology and Business University, Beijing 100048, P.R. China}
\email{liboya\_btbu@163.com}

\author{HongJie Ju}
\address{HongJie Ju: School of Science,Beijing University of Posts and Telecommunications, Beijing 100876, P.R. China}
\email{hjju@bupt.edu.cn}

\author{ YanNan Liu}
\address{YanNan Liu: School of Mathematics and Statistics, Beijing Technology and Business University, Beijing 100048, P.R. China}
\email{liuyn@th.btbu.edu.cn}

\thanks{This work was supported in part by
the Natural Science Foundation of Beijing Municipality (No. 1212002) and
the National
Natural Science Foundation of China (Grant No.12071017, 11871432, 11871102).}

\date{}

\begin{abstract}
In this paper, a generalization of the  $L_{p}$-Christoffel-Minkowski problem
 is studied. We consider an anisotropic curvature flow and derive the long-time existence of the flow. Then under some initial data, we  obtain the existence  of smooth solutions to  this problem for $c=1$.
\end{abstract}

\keywords{
$L_{p}$-Christoffel-Minkowski problem,
Existence of solutions,
Anisotropic curvature flow.
}

\subjclass[2010]{
35J96, 35J75, 53A15, 53A07.
}


\maketitle
\vskip4ex

\section{Introduction}
Recently, as important developments of the classical Brunn-Minkowski theory in convex geometry,  the dual Brunn-Minkowski theory is
developing rapidly.  Dual curvature measures and  their associated
variational formulas was firstly introduced by  Huang, Lutwak,
Yang and Zhang in their recent groundbreaking work \cite{HuangActa2016}.
Then the dual curvature measures were extended into the $L_p$ case in
\cite{LutwakYangZhang2018} and then studied in \cite{BF2019,Chen&Huang&Zhao2018,HuangZhao2018}.

In this paper, we consider the following fully nonlinear equation, which is a generalization of $L_{p}$-Christoffel-Minkowski problem
\begin{equation} \label{eq}
\frac{ h^{1-p}}{(h^{2} + |\nabla h|^{2})^{\frac{n-q}{2}}} \sigma_{k}(x) f(x)=  c\text{ on } \uS
\end{equation}
for some positive constant $c$.
Here  $f$ is a given positive and smooth function on the unit sphere
 $\uS$ and $h$ is the support function defined on $\uS$.
$\sigma_{k}(x,t)$ is the
$k$-th  elementary symmetric function for principal curvature radii and $\nabla$ is  the  Levi-Civita connection on  $\uS$.

 Equation \eqref{eq} is just the smooth case of $L_{p}$ dual Minkowski problem when $k = n-1$.

When $q=n, k=n-1$, Eq. \eqref{eq} reduces to the $L_p$-Minkowski
problem, which has been extensively studied, see e.g. Schneider' book \cite{Schneider}.
For $q=n, 1\leq k< n-1$, Eq. \eqref{eq}
is known as the $L_{p}$-Christoffel-Minkowski problem and is the  classical Christoffel-Minkowski problem for $p=1$.   Under a sufficient condition on the prescribed function, existence of solution for the classical Christoffel-Minkowski problem was given  in \cite{GuanMa2003}.

The $L_{p}$-Christoffel-Minkowski problem is related to the problem of prescribing $k$-th $p$-area measures. Hu, Ma \& Shen in  \cite{HMS2004} proved the existence of convex solutions to the $L_{p}$-Christoffel-Minkowski problem  for $ p \geq k+1$ under appropriate conditions. Using the methods of geometric flows, Ivaki in \cite{Ivaki2019} and then  Sheng \& Yi in \cite{ShengYi2020} also gave  the existence of smooth convex solutions to the $L_{p}$-Christoffel-Minkowski problem  for $ p \geq k+1$.
In case $1 <  p < k+1$, Guan \& Xia in  \cite{GuanXia2018} established the existence of convex body with prescribed $k$-th even $p$-area  measures. In \cite{BIS2021}, the authors considered a generalized $L_{p}$-Christoffel-Minkowski problem and gave the the existence of smooth solutions by curvature flow method. To the best of our knowledge, there is no other existence result about Eq. \eqref{eq}

The existence of smooth solutions  for Eq.  \eqref{eq} is  concerned in this paper. In general, there is no variational structure, We use a flow method involving  $\sigma_{k}$, support function and radial function to give the
existence of smooth solutions  for Eq.  \eqref{eq} wich $c=1$.

Let $M_{0}$ be a smooth, closed, strictly convex hypersurface in the Euclidean
space $\R^{n}$, which encloses the origin and is given by a smooth embedding
$X_{0}: \mathbb{S}^{n-1} \rightarrow \R^{n}$.
Consider a family of closed hypersurfaces $\set{M_t}$ with $M_t=X(\uS,t)$, where $X:
\mathbb{S}^{n-1}\times[0,T) \rightarrow \R^{n}$ is a smooth map satisfying the
following initial value problem:
\begin{equation}\label{feq}
  \begin{split}
    \frac{\pd X}{\pd t} (x,t)
    &= f(\nu)\sigma_{k}(x,t) \langle X, \nu \rangle^{2-p}|X|^{q-n} \nu - \eta(t)X,\\
    X(x,0) &= X_{0}(x).
  \end{split}
\end{equation}
Here  $\nu$ is the unit outer
normal vector of  $M_t$ at the point $X(x,t)$.  $\langle \cdot,\cdot \rangle$ is the
standard inner product in $\R^n$, $\eta$ is a scalar function to be specified later, and $T$ is the maximal time
for which the solution exists. We use $\{e_{ij}\}, 1 \leq i, j \leq n-1$ and $\nabla$ for the standard metric and Levi-Civita connection of  $\uS$ respectively. Principal radii
of curvature are the eigenvalues of the matrix
\beqs b_{ij} := \nabla_{i}\nabla_{j}h  + e_{ij}h \eeqs
with respect to $\{e_{ij}\}$.
$\sigma_{k}(x,t)$ is the
$k$-th  elementary symmetric function for principal curvature radii of $M_{t}$ at $X(x,t)$
and $k$ is an integer with $1\leq k < n-1$. In this paper,
$\sigma_{k}$ is normalized so that $\sigma_{k} (1, \ldots, 1) = 1$.

Geometric flows with speed of symmetric polynomial of the principal curvature radii of the hypersurface have been  extensively studied, see e.g. \cite{Urbas1991}.

On the other hand,  anisotropic curvature flows provide
alternative methods to prove the existences of elliptic PDEs arising from  convex geometry,  see e.g. \cite{BIS2019,Chen&Huang&Zhao2018,Ivaki2019,LiuLuTrans,ShengYi2020}.

The scalar function $\eta(t)$ in \eqref{feq} is usually used to  keep $M_t$ normalized in a certain sense, see for examples \cite{Chen&Huang&Zhao2018,Ivaki2019,ShengYi2020}.
In this paper, $\eta$ is  given by
\begin{equation}\label{eta}
  \eta(t) = \frac{\int_{\uS} \rho^{q-n}h\sigma_{k}\dd x}{ \int_{\uS}\frac{1}{f(x)}h^{p}\dd x},
\end{equation}
where $h(\cdot,t)$ and $\rho(u,t)$ are  the support
function and radial function of the convex hypersurface $M_t$.

To obtain the long-time existence of flow \eqref{feq}, we need some constraints on $f$.

${\rm \bf (A)}$: Let $s$ be the arc-length parameter  and $f$ be a smooth function on $\uS$. On every great circle, $f$  satisfies
\begin{equation*}f_{ss} - \frac{1}{k+1} \left(k  +  \frac{q-n}{q-n-k-1}+ \frac{p-2}{p+k-1}\right)f_{s}^{2}f^{-1} + (p+k-1-q+n)f > 0.
\end{equation*}

The main results of this paper are stated as  follows.

\begin{theorem} \label{thm1}
  Assume $M_{0}$ is a smooth, closed and strictly convex hypersurface in $\R^{n}$. Suppose $k$ is an integer with $1 \leq k < n-1$ and $k+1 < q-n < p-k-1 $.
Suppose $f$ is a smooth positive function on $\uS$ satisfying  ${\rm \bf (A)}$.
 Then flow \eqref{feq}
  has a unique smooth solution $M_{t}$ for all time $t > 0$.
  Moreover, when $t\to\infty$, a subsequence of $M_{t}$ converges in $C^{\infty}$ to a smooth,
  closed,  strictly convex hypersurface.
\end{theorem}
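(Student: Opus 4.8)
\emph{Step 1 (scalarization, short-time existence, conservation law).} Since the hypersurface $M_t$ with support function $h(\cdot,t)$ satisfies $X=h\nu+\nabla h$, one has $\langle X,\nu\rangle=h$ and $|X|^2=h^2+|\nabla h|^2=:\rho^2$, and a flow $\pd_tX=F\nu-\eta X$ induces $\pd_th=F-\eta h$. Hence \eqref{feq} is equivalent to the scalar initial value problem
\begin{equation*}
  \pd_t h = f(x)\,\sigma_k(b_{ij})\,h^{2-p}\,(h^2+|\nabla h|^2)^{\frac{q-n}{2}}-\eta(t)\,h,\qquad h(\cdot,0)=h_0,
\end{equation*}
with $b_{ij}=\nabla_i\nabla_j h+e_{ij}h$. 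On strictly convex $h$ the matrix $b_{ij}$ is positive definite, $\pd\sigma_k/\pd b_{ij}$ is positive definite, and $\sigma_k^{1/k}$ is concave there, so the equation is parabolic; standard parabolic theory yields a unique smooth strictly convex solution on a maximal interval $[0,T)$. A direct computation using the definition \eqref{eta} of $\eta$ gives
\begin{equation*}
  \frac{d}{dt}\int_{\uS}\frac{1}{f(x)}h^p\,dx = p\!\left(\int_{\uS}\rho^{q-n}h\sigma_k\,dx-\eta(t)\int_{\uS}\frac{1}{f}h^p\,dx\right)=0,
\end{equation*}
so $\int_{\uS}f^{-1}h^p\,dx$ is conserved. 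In addition one isolates a monotone functional $\mathcal{J}(t)$ built from $h$, $\rho$ and $\sigma_k$ (a mixed/``dual'' quermassintegral-type quantity) whose derivative, after using the conservation law to cancel the Lagrange-multiplier term, equals a definite multiple of $\int_{\uS}(\pd_th)^2\,f^{-1}h^{p-2}\,dx$; thus $\mathcal{J}$ is monotone along the flow.

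\emph{Step 2 ($C^0$ and $C^1$ estimates).} Using the maximum principle on $\pd_th$ at the spatial max and min of $h$ (where $\nabla h=0$, $\rho=h$, and $b_{ij}$ is bounded above/below by $he_{ij}$, so $\sigma_k$ is pinched between powers of $h$), together with the conserved integral and the monotonicity of $\mathcal{J}$ to control $\eta(t)$ from above and below by positive constants, one bounds $\max_{\uS}h(\cdot,t)$ above and $\min_{\uS}h(\cdot,t)$ below by positive constants independent of $t$. This is exactly where the hypotheses $k+1<q-n<p-k-1$ are used: they place the exponents $q-n-p+k+2<1$ and the Hölder/ellipticity exponents in the ranges that make the maximum-principle signs and the $\eta$-bounds come out right. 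Since each $M_t$ is convex with $h$ trapped between two positive constants, $M_t$ contains and is contained in fixed balls, hence $\rho$ is uniformly bounded and $|\nabla h|^2=\rho^2-h^2$ is uniformly bounded, giving the $C^1$ estimate.

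\emph{Step 3 ($C^2$ estimates).} One next bounds the principal radii of $M_t$ from above and, crucially, away from zero from below, uniformly in $t$, so that the equation remains uniformly parabolic. The upper bound for the largest principal radius follows from the maximum principle applied to an auxiliary function combining $\log\lambda_{\max}(b_{ij})$ (or $\sigma_1(b_{ij})$) with a suitable power of $h$, using the $C^0$--$C^1$ bounds. The lower bound---uniform convexity---is the delicate point and is where hypothesis $\mathbf{(A)}$ enters: it is a Guan--Ma--type structural condition on $f$ which, via a test-function/maximum-principle argument for the smallest eigenvalue of $b_{ij}$ restricted to great circles (this is why $\mathbf{(A)}$ is phrased through the arc-length second derivative $f_{ss}$), forces a positive lower bound for $\sigma_k$ and hence for all the principal radii. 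I expect this step---propagation of a quantitative lower curvature bound---to be the main obstacle.

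\emph{Step 4 (long-time existence and subsequential convergence).} With uniform two-sided $C^2$ bounds and uniform parabolicity, $\sigma_k$ is a uniformly elliptic concave operator, so Krylov--Safonov together with Schauder estimates bootstrap to uniform $C^{\infty}$ bounds on $[0,T)$ that do not degenerate as $t\to T$; the standard continuation argument (extend using $h(\cdot,T)$ as new smooth strictly convex initial data, contradicting maximality) then forces $T=\infty$. Finally, since $\mathcal{J}$ is monotone and, by the uniform estimates, bounded, one gets $\int_0^{\infty}\!\int_{\uS}(\pd_th)^2 f^{-1}h^{p-2}\,dx\,dt<\infty$; hence there is a sequence $t_i\to\infty$ along which $\pd_th(\cdot,t_i)\to 0$ in $L^2$, and by the uniform $C^{\infty}$ bounds, interpolation, and Arzel\`a--Ascoli (plus Blaschke selection for the underlying convex bodies) a subsequence of $M_{t_i}$ converges in $C^{\infty}$ to a smooth, closed, strictly convex hypersurface $M_{\infty}$. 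Its support function then satisfies $f\sigma_k h_\infty^{2-p}\rho_\infty^{q-n}=\eta_\infty h_\infty$, i.e. \eqref{eq} with $c=\eta_\infty$, after which a dilation normalizes $c=1$ --- though that last step belongs to the companion existence statement rather than to Theorem \ref{thm1}.
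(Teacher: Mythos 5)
Your overall architecture (scalar equation for the support function, conservation of $\int_{\uS}f^{-1}h^{p}\dd x$, $C^{0}$--$C^{1}$ bounds, two-sided curvature bounds, Krylov--Safonov plus Schauder, Arzel\`a--Ascoli for subsequential convergence) is the same as the paper's, but two steps have genuine gaps. First, the ``monotone functional $\mathcal{J}$'' whose time derivative is a multiple of $\int_{\uS}(\pd_t h)^2 f^{-1}h^{p-2}\dd x$ is asserted, never constructed, and for this equation (with both the $\rho^{q-n}$ factor and $\sigma_k$, $k<n-1$) no such Lyapunov quantity is known; the paper explicitly notes the flow has no variational structure and never uses one. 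This matters because you lean on $\mathcal{J}$ to bound $\eta(t)$, which is exactly what the lower bound for $h$ requires. The paper's route is different and self-contained: $h_{\max}\le C$ comes directly from the conserved integral and $p>1$ (comparing $h$ with the maximal radial distance on a half-sphere), then $\eta$ is bounded above using monotonicity of mixed volumes, $h_{\min}^{k+1}\le\int_{\uS}h\sigma_k\dd x/\omega_{n-1}\le h_{\max}^{k+1}$, together with the fixed denominator in \eqref{eta}, and only then does the maximum principle at $h_{\min}$ (where $\sigma_k\ge h_{\min}^{k}$ and $k+1-p+q-n<0$) give the lower bound. Also, for Theorem \ref{thm1} the subsequential convergence needs only the uniform estimates; the statement that the limit solves \eqref{eq} is the content of the Corollary and is obtained in the paper with $\eta\equiv 1$ and special initial data, not via an energy.

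Second, your Step 3 --- the curvature estimate --- is the technical heart of the theorem and you leave it as an expectation rather than a proof, and your description of how the hypotheses enter is off. In the paper, the two-sided bound on $\sigma_k$ is obtained first, from the evolution equation of $F/h=f h^{1-p}\rho^{q-n}\sigma_k$, whose zeroth-order term has the right sign precisely because $q-n+k+1-p<0$ (plus the $\eta$ bounds); condition (A) plays no role there. The upper bound on the principal curvatures is then proved by a maximum principle for the largest eigenvalue of $b^{ij}/h$, using the inverse-concavity inequality $(\sigma_k^{ij,mn}+2\sigma_k^{im}b^{nj})\nabla_1 b_{ij}\nabla_1 b_{mn}\ge\frac{k+1}{k}(\nabla_1\sigma_k)^2/\sigma_k$ and reducing the zeroth-order terms to $(k+1)N^{\frac{k}{k+1}}\bigl(N^{\frac1{k+1}}+(N^{\frac1{k+1}})_{ss}\bigr)$ along a great circle; the explicit computation of $N_{ss}$ for $N=f h^{2-p}\rho^{q-n}$ and the completion of squares there is exactly where (A), the lower bound $k+1<q-n$, and $p\ge 2$ are consumed. (So attributing $k+1<q-n$ to the $C^0$ step is incorrect: the paper points out the $C^0$ estimates need only $0\le q-n<p-k-1$.) Finally, the upper bound on the principal radii is not a separate $\log\lambda_{\max}$ argument: it follows by combining $\sigma_k\le C$ with the lower bound on the radii just obtained. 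Without these computations the proposal does not establish uniform parabolicity, and hence neither long-time existence nor the convergence statement.
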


For the proof of Theorem \ref{thm1}, we will see  it is enough to
 obtain the uniform positive
upper and lower bounds for support functions of $\set{M_t}$ under condition $0\leq q-n < p-k-1 $. And the stronger condition $k+1 < q-n < p-k-1 $ is only used to derive the uniform bound of principal curvature, see Lemma \ref{C2-estimate-2} for details.

\begin{corollary} \label{coreq}
  Under the assumptions of Theorem \ref{thm1},
  there exists a smooth  solution to equation \eqref{eq} with  $c=1$.
\end{corollary}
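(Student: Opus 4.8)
The plan is to derive Corollary \ref{coreq} from Theorem \ref{thm1} by extracting a stationary solution from the long-time limit of the flow \eqref{feq}. First I would observe that, by Theorem \ref{thm1}, the rescaled hypersurfaces $M_t$ converge along a subsequence $t_j \to \infty$ in $C^\infty$ to a limiting smooth, closed, strictly convex hypersurface $M_\infty$ with support function $h_\infty$. The key is to show that $h_\infty$ is a solution of the elliptic equation \eqref{eq} with $c=1$. The natural device is a monotone functional along the flow: one considers the quantity $\int_{\uS} \tfrac{1}{f(x)} h^p \dd x$ (or a closely related entropy-type functional adapted to the exponents $p, q, n, k$), computes its time derivative using \eqref{feq} and \eqref{eta}, and shows it is monotone, bounded (thanks to the uniform two-sided bounds on the support functions established in the proof of Theorem \ref{thm1}), hence has a finite limit and a vanishing derivative along $t_j$.

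Concretely, I would first record the evolution equation for the support function. Since $\partial_t X = f(\nu)\sigma_k \langle X,\nu\rangle^{2-p}|X|^{q-n}\nu - \eta X$ and $h(x,t) = \langle X(x,t),\nu\rangle$ with $\langle X,\nu\rangle = h$ and $|X| = \rho$ on the appropriate parametrization, one gets
\begin{equation*}
  \frac{\pd h}{\pd t} = f(\nu)\sigma_k h^{2-p}\rho^{q-n} - \eta(t) h.
\end{equation*}
Next I would differentiate $\mathcal{J}(t) := \int_{\uS} \tfrac{1}{f(x)} h^p \dd x$ in time, substitute the above, and use the definition \eqref{eta} of $\eta$ together with the integration-by-parts identity $\int_{\uS} h\sigma_k \dd x = \int_{\uS} \sigma_k$-type symmetry to show $\tfrac{d}{dt}\mathcal{J}(t) \le 0$ with equality only at a stationary point; here one may need the other standard functional $\int \rho^{q-n} \sigma_k \dd x$-type quantity and a Hölder/Cauchy–Schwarz step to see the sign. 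Since $\mathcal{J}$ is monotone and bounded below and above (by the $C^0$ estimates from Theorem \ref{thm1}), $\mathcal{J}'(t_j) \to 0$, which forces the defect in the flow equation to vanish in the limit, i.e. $f(x)\sigma_k(x) h_\infty^{2-p}\rho_\infty^{q-n} \equiv \eta_\infty h_\infty$ for a constant $\eta_\infty > 0$.

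Finally I would translate this back into \eqref{eq}. Using $\rho^2 = h^2 + |\nabla h|^2$ on $\uS$, the identity $f \sigma_k h^{2-p}(h^2+|\nabla h|^2)^{(q-n)/2} = \eta_\infty h$ rearranges to $\tfrac{h^{1-p}}{(h^2+|\nabla h|^2)^{(n-q)/2}}\sigma_k f = \eta_\infty$, which is \eqref{eq} with $c = \eta_\infty$. To upgrade $c = \eta_\infty$ to $c = 1$, I would invoke the scaling invariance of the problem: replacing $h_\infty$ by $\lambda h_\infty$ scales the left side of \eqref{eq} by $\lambda^{1-p+k-(n-q)} = \lambda^{k-p+1+q-n}$, and since the hypothesis $k+1 < q-n < p-k-1$ gives $k - p + 1 + q - n \neq 0$, one can choose $\lambda$ so that the constant becomes exactly $1$. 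The main obstacle is establishing the monotonicity of the functional with the correct sign and the precise algebraic matching of the exponents so that $\eta(t)$ as defined in \eqref{eta} is exactly the Lagrange multiplier of $\mathcal{J}$; this is where the interplay between $p$, $q-n$, and $k$ in condition ${\rm \bf (A)}$ and in the statement of Theorem \ref{thm1} must be used carefully, and where a naive computation could give the wrong sign or a spurious nonzero limit for $\mathcal{J}'$.
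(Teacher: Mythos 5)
Your overall strategy---extract the limit $M_\infty$ from Theorem \ref{thm1} and show it is stationary---has the right shape, but the mechanism you propose for stationarity does not work, and it is precisely the difficulty the paper is built to avoid. Along the normalized flow, $\eta(t)$ in \eqref{eta} is chosen exactly so that $\mathcal{J}(t)=\int_{\uS} f^{-1}h^{p}\dd x$ is \emph{conserved}: the first computation in the proof of Lemma \ref{C0-estimate} gives $\mathcal{J}'(t)\equiv 0$. Hence ``$\mathcal{J}'(t_j)\to 0$'' is an identity carrying no asymptotic information; at every time it only says that the defect $f\sigma_k h^{2-p}\rho^{q-n}-\eta h$ integrates to zero against $p h^{p-1}/f$, which does not force it to vanish pointwise in the limit. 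The paper states explicitly that \eqref{eq} has no variational structure in this generality, so there is no evident Lyapunov functional whose dissipation controls the defect; your closing hedge (``a naive computation could give the wrong sign or a spurious nonzero limit for $\mathcal{J}'$'') is exactly where the argument breaks, and the missing monotone quantity is never produced. Subsequential $C^\infty$ convergence in Theorem \ref{thm1} by itself does not make $M_\infty$ a solution, so this gap is essential.

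What the paper does instead is choose special initial data and a different normalization. Since $q-n+k+1-p<0$, one can pick $M_0$ with $F/h>1$, where $F=f\sigma_k h^{2-p}\rho^{q-n}$, and run the flow with $\eta\equiv 1$ (the $C^0$ bounds survive this change by the Remark following Lemma \ref{C0-estimate}, and the curvature estimates then go through as before). The evolution equation \eqref{Feq} shows, via the maximum principle and the sign $q-n+k+1-p<0$, that $F/h>1$ is preserved, so $\partial_t h=F-h>0$ for all time; since $h$ is uniformly bounded and increasing, $\int_0^{\infty}(F-h)\dd t<\infty$, which yields a subsequence $t_j\to\infty$ along which $F-h\to 0$, and passing to the $C^\infty$ limit gives \eqref{eq} with $c=1$ directly. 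In particular no rescaling is needed: your final scaling observation (the left side of \eqref{eq} scales by $\lambda^{1-p+q-n+k}\neq 1$ for $\lambda\neq 1$) is correct but moot, since the step that would deliver the soliton identity $F=\eta_\infty h$ in the limit is the one your proposal does not establish.
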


This paper is organized as follows.
In section 2, we give some basic knowledge about the flow \eqref{feq} and evolution equations of some geometric quantities.
In section 3, the long-time existence of flow \eqref{feq} will be obtained.
First, we obtain the uniform positive
upper and lower bounds for support functions of $\set{M_t}$.
Based on the bounds of support functions, we obtain the uniform bounds of  principal curvatures
by constructing proper auxiliary functions.
The long-time existence of flow \eqref{feq} then follows by standard arguments.
In section 4, under some special initial condition, we prove that
a subsequence of $\set{M_t}$ converges to a smooth solution to equation
\eqref{eq} with $c=1$, completing the proofs of Corollar \ref{coreq}.

\section{Preliminaries}
Let $\R^n$ be the $n$-dimensional Euclidean space, and $\uS$ be the unit sphere
in $\R^n$.
Assume $M$ is a smooth closed strictly convex hypersurface in $\R^{n}$.
Without loss of generality, we may assume that $M$ encloses the origin.
The support function $h$ of $M$ is defined as
\begin{equation*}
h(x) := \max_{y\in M} \langle y,x \rangle, \quad \forall x\in\uS,
\end{equation*}
where $\langle \cdot,\cdot \rangle$ is the standard inner product in $\R^n$.

Denote the Gauss map of $M$  by $\nu_M$.
Then $M$ can be parametrized by the inverse Gauss map $X :
\mathbb{S}^{n-1}\rightarrow M$ with $X(x) =\nu_M^{-1}(x)$.
The support function $h$ of $M$ can be computed by
\begin{equation} \label{h}
  h(x) = \langle x, X(x)\rangle, \indent x \in \mathbb{S}^{n-1}.
\end{equation}
Note that $x$ is just the unit outer normal of $M$ at $X(x)$.
Differentiating \eqref{h}, we have
\begin{equation*}
  \nabla_{i} h = \langle \nabla_{i}x, X(x)\rangle + \langle x, \nabla_{i}X(x)\rangle.
\end{equation*}
  Since $\nabla_{i}X(x)$ is tangent to $M$ at $X(x)$, we have
\begin{equation*}
  \nabla_{i} h = \langle \nabla_{i}x, X(x)\rangle.
\end{equation*}
It follows that
\begin{equation}\label{Xh}
  X(x) = \nabla h + hx.
\end{equation}

By differentiating \eqref{h} twice, the  second fundamental form $A_{ij}$   of $M$
can be computed in terms of the support function, see for example \cite{Urbas1991},
\begin{equation}
\label{A} A_{ij} =  \nabla_{ij}h + he_{ij},
\end{equation}
where $\nabla_{ij} = \nabla_{i}\nabla_{j}$ denotes the second order covariant derivative with respect to $e_{ij}$.
The  induced metric matix $g_{ij}$ of $M$ can be derived by Weingarten's formula,
\begin{equation}
  \label{g}
  e_{ij} = \langle \nabla_{i}x, \nabla_{j}x\rangle  = A_{im}A_{lj}g^{ml}.
\end{equation}
The principal radii of curvature are the eigenvalues of the matrix $b_{ij} =
A^{ik}g_{jk}$.
When considering a smooth local orthonormal frame on $\uS$, by virtue of
\eqref{A} and \eqref{g}, we have
\begin{equation}
  \label{radii}
  b_{ij} = A_{ij} = \nabla_{ij}h + h\delta_{ij}.
\end{equation}
We will use
$b^{ij}$ to denote the inverse matrix of $b_{ij}$.

From the evolution equation of $X(x,t)$ in flow \eqref{feq}, we derive the
evolution equation of the corresponding support function $h(x,t)$:
\begin{equation}\label{seq}
\frac{\pd h(x,t)}{\pd t} = f(x)\sigma_{k}(x,t)h^{2-p}\rho^{q-n} - \eta(t)h(x,t).
\end{equation}

The radial function $\rho$ of $M$ is given by
\begin{equation*}
\rho(u) :=\max\set{\lambda>0 : \lambda u\in M}, \quad\forall\ u\in\uS.
\end{equation*}
Note that $\rho(u)u\in M$.

From $\eqref{Xh}$, $u$ and $x$ are related by
\begin{equation}
  \label{rs}
  \rho(u)u = \nabla h(x) + h(x)x
\end{equation}
and
\beqs \rho^{2} = |\nabla h|^{2} + h^{2}.\eeqs
Let $x = x(u,t)$, by $\eqref{rs}$, we have
\begin{equation*}
\log \rho(u,t) = \log h(x,t) - \log \langle x,u \rangle.
\end{equation*}
Differentiating the above identity, we have
\begin{equation*}
  \begin{split}
    \frac{1}{\rho(u,t)}\frac{\pd \rho(u,t)}{\pd t}
    &= \frac{1}{h(x,t)}\Bigl(\nabla h\cdot \frac{\pd x(u,t)}{\pd t} + \frac{\pd h(x,t)}{\pd t}\Bigr)
    - \frac{u}{\langle x,u \rangle} \cdot \frac{\pd x(u,t)}{\pd t}\\
    &= \frac{1}{h(x,t)}\frac{\pd h(x,t)}{\pd t}
    + \frac{1}{h(x,t)}[\nabla h - \rho(u,t)u]\cdot \frac{\pd x(u,t)}{\pd t}\\
    &= \frac{1}{h(x,t)}\frac{\pd h(x,t)}{\pd t}.
  \end{split}
\end{equation*}
The evolution equation of  radial function then follows from \eqref{seq},
\begin{equation}\label{req}
\frac{\pd \rho}{\pd t} (u,t)= f(x)\sigma_{k}(u,t)h^{1-p}\rho^{q-n+1} -  \eta(t)\rho(u,t),
\end{equation}
where $\sigma_{k}(u,t)$ denotes the fundamental symmetric function of principal radii at $\rho(u,t)u \in M_{t}$ and
$f$ takes value at the unit normal vector $x(u,t)$.

 In the rest of the paper, we take a local orthonormal frame $\{e_{1}, \cdots, e_{n-1}\}$ on $\mathbb{S}^{n-1} $ such that the standard metric on $\mathbb{S}^{n-1} $ is $\{\delta_{ij}\}$.  Double indices
 always mean to sum from $1$ to $n-1$. We  denote partial derivatives $ \dfrac{\partial \sigma_{k}}{\partial b_{ij}}$ and $ \dfrac{\partial^{2} \sigma_{k}}{\partial b_{ab}\partial b_{mn}}$ by $\sigma_{k}^{ij}$ and $\sigma_{k}^{ab,mn}$  respectively.  For convenience, we also write
\begin{equation*}
\begin{split}
N &= f(x)h^{2-p}\rho^{q-n}, \\
F & = N\sigma_{k}.
\end{split}
\end{equation*}

By the flow equation \eqref{feq}, we can derive evolution equations of some geometric quantities.

 \begin{lemma}\label{evolutions}
The following evolution equations hold along the flow \eqref{feq}.
\begin{equation*}
\begin{split}
& \partial_{t}b_{ij} -  N\sigma_{k}^{ab}\nabla_{ab}b_{ij} \\
 & = (k+1) N\sigma_{k}\delta_{ij} -  N\sigma_{k}^{ab}\delta_{ab}b_{ij} + N(\sigma_{k}^{ia}b_{ja} - \sigma_{k}^{ja}b_{ia})\\
& +  N\sigma_{k}^{ab,mn}\nabla_{j}b_{ab}\nabla_{i}b_{mn} + \left(\sigma_{k}\nabla_{ij}N   + \nabla_{j}\sigma_{k}\nabla_{i}N
+ \nabla_{i}\sigma_{k}\nabla_{j}N\right) - b_{ij}\eta(t)\\
& \partial_{t}b^{ij} -  N\sigma_{k}^{ab}\nabla_{ab}b^{ij} \\
 & =- (k+1) N\sigma_{k}b^{ia}b^{ja} +  N\sigma_{k}^{ab}\delta_{ab}b^{ij} - Nb^{ia}b^{jb}(\sigma_{k}^{ra}b_{rb} - \sigma_{k}^{rb}b_{ra})\\
& -  Nb^{il}b^{js}(\sigma_{k}^{ab,mn} + 2\sigma_{k}^{am}b^{nb})\nabla_{l}b_{ab}\nabla_{s}b_{mn}\\
& -b^{ia}b^{jb}( \sigma_{k}\nabla_{ab}N  + \nabla_{b}\sigma_{k}\nabla_{a}N
+ \nabla_{a}\sigma_{k}\nabla_{b}N) + \eta(t)b^{ij}.
\end{split}
 \end{equation*}
 \end{lemma}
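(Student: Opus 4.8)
Both identities will follow from a direct computation; I would organize it in four steps.

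First I would record the evolution of the support function: taking the inner product of \eqref{feq} with the outer unit normal $x$ and using \eqref{h} gives \eqref{seq}, i.e.\ $\partial_t h = N\sigma_k - \eta h$ (recall $F = N\sigma_k$). Since the round metric $\delta_{ij}$ on $\uS$ does not depend on $t$ and $\eta = \eta(t)$ is spatially constant, differentiating $b_{ij} = \nabla_{ij}h + h\delta_{ij}$ in time and commuting $\partial_t$ past the covariant derivatives gives
\begin{equation*}
\partial_t b_{ij} = \nabla_{ij}(N\sigma_k) + (N\sigma_k)\delta_{ij} - \eta\, b_{ij}.
\end{equation*}

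Next I would expand the Hessian of the product and differentiate $\sigma_k$ by the chain rule, viewing $\sigma_k$ as a function of the entries $b_{ab}$:
\begin{equation*}
\nabla_{ij}(N\sigma_k) = \sigma_k\nabla_{ij}N + \nabla_i N\,\nabla_j\sigma_k + \nabla_j N\,\nabla_i\sigma_k + N\sigma_k^{ab}\nabla_{ij}b_{ab} + N\sigma_k^{ab,mn}\nabla_i b_{ab}\nabla_j b_{mn}.
\end{equation*}
Every term except $N\sigma_k^{ab}\nabla_{ij}b_{ab}$ already appears, in the required form, in the statement. For that term I would invoke the Ricci identity on $\mathbb{S}^{n-1}$ (which has constant sectional curvature $1$): applied twice to the third covariant derivatives of $h$ it yields a Simons-type interchange formula of the shape
\begin{equation*}
\nabla_{ij}b_{ab} = \nabla_{ab}b_{ij} + \delta_{ij}b_{ab} - \delta_{ab}b_{ij} + (\text{further } \delta\otimes b \text{ terms}),
\end{equation*}
see \cite{Urbas1991}. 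Contracting with $\sigma_k^{ab}$, using the symmetry of $\sigma_k^{ab}$ and Euler's identity $\sigma_k^{ab}b_{ab} = k\sigma_k$, the curvature corrections collapse to $N\bigl(k\sigma_k\delta_{ij} - \sigma_k^{ab}\delta_{ab}b_{ij} + \sigma_k^{ia}b_{ja} - \sigma_k^{ja}b_{ia}\bigr)$; together with the $(N\sigma_k)\delta_{ij}$ term from the previous step this produces the coefficient $(k+1)N\sigma_k\delta_{ij}$, and the first evolution equation follows. (The skew piece $\sigma_k^{ia}b_{ja} - \sigma_k^{ja}b_{ia}$ in fact vanishes since $\sigma_k^{ab}$ and $b_{ab}$ commute, but it is convenient to carry it along when working in an arbitrary frame.)

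For the equation for $b^{ij}$ I would differentiate the identity $b^{ia}b_{aj} = \delta^i_j$, which gives $\partial_t b^{ij} = -b^{ia}b^{jb}\partial_t b_{ab}$, $\nabla_m b^{ij} = -b^{ia}b^{jb}\nabla_m b_{ab}$ and, iterating, $\nabla_{mn}b^{ij} = -b^{ia}b^{jb}\nabla_{mn}b_{ab} + (\text{terms quadratic in }\nabla b)$. Substituting these together with the first identity into $\partial_t b^{ij} - N\sigma_k^{mn}\nabla_{mn}b^{ij}$: the zeroth- and first-order terms match the claimed right-hand side after the elementary contractions $b^{ia}b^{jb}\delta_{ab} = b^{ia}b^{ja}$ and $b^{ia}b^{jb}b_{ab} = b^{ij}$, while the quadratic-in-$\nabla b$ contributions (the one carrying $\sigma_k^{ab,mn}$ and the ones produced by the correction term in $\nabla_{mn}b^{ij}$) recombine, after relabelling dummy indices and using the symmetries of $\sigma_k^{ab}$ and $\sigma_k^{ab,mn}$, into the displayed expression $-Nb^{il}b^{js}(\sigma_k^{ab,mn}+2\sigma_k^{am}b^{nb})\nabla_l b_{ab}\nabla_s b_{mn}$. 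This gives the second identity.

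The main obstacle is entirely in the third step: tracking the exact curvature correction terms that arise from commuting covariant derivatives on the sphere and checking that they organize into precisely the stated combination --- in particular that the coefficient of $N\sigma_k\delta_{ij}$ comes out to be $k+1$ rather than $k$ or $1$; everything else is routine, if somewhat lengthy, bookkeeping with the inverse matrix $b^{ij}$.
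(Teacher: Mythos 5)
Your proposal is correct: starting from $\partial_t h = N\sigma_k-\eta h$, differentiating $b_{ij}=\nabla_{ij}h+h\delta_{ij}$, expanding $\nabla_{ij}(N\sigma_k)$ by the chain rule, commuting derivatives on $\uS$ via the Codazzi/Ricci identities with Euler's relation $\sigma_k^{ab}b_{ab}=k\sigma_k$ (which, together with the extra $N\sigma_k\delta_{ij}$ from the $h\delta_{ij}$ term, gives the coefficient $k+1$), and then passing to $b^{ij}$ through $b^{ia}b_{aj}=\delta^i_j$, is exactly the standard computation behind this lemma. The paper itself offers no proof beyond citing Lemma 2.3 of \cite{Ivaki2019}, and your argument is essentially that computation, so the approach coincides with the paper's.
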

For the specific computations, one can refer to  Lemma 2.3 in \cite{Ivaki2019}.
\section{The long-time existence of  the flow }

In this section, we will give a priori estimates about support functions and curvatures to
obtain the long-time existence of flow \eqref{feq} under assumptions of Theorem \ref{thm1}.

In the rest of this paper, we  assume that $M_{0}$ is a smooth,
closed,  strictly convex hypersurface in $\R^{n}$ and
$h: \uS\times[0,T)\to \R$ is a smooth solution to the Eq. \eqref{seq}
with the initial $h(\cdot,0)$ the support function of $M_0$.
Here $T$ is the maximal time for which the solution exists.
Let $M_t$ be the convex hypersurface determined by $h(\cdot,t)$, and
$\rho(\cdot,t)$ be the corresponding radial function.

We first give the uniform positive upper and lower bounds of $h(\cdot,t)$ and $\rho(u,t)$ for $t\in[0,T)$.

\begin{lemma}\label{C0-estimate} Let $h$ be a smooth  solution of $\eqref{seq}$ on $\mathbb{S}^{n-1} \times [0, T)$ and $f$ be a positive, smooth function on $\uS$. $p > 1$ and  $ 0 \leq q-n < p-k-1$. Then
\beqn  \label{h1}\frac{1}{C} \leq  h(x,t) \leq C, \\
\label{rho1}\frac{1}{C} \leq  \rho(u,t) \leq C ,\eeqn
where $C$ is a positive constant independent of $t$.
\end{lemma}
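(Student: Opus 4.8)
The plan is to combine the maximum principle with a conservation law hidden in the definition of $\eta(t)$. The first observation I would make is that $t\mapsto\int_{\uS}f^{-1}(x)h^{p}(x,t)\dd x$ is \emph{constant} along the flow: differentiating under the integral sign, using \eqref{seq} and then the very definition \eqref{eta} of $\eta$, the two resulting terms cancel,
\[
\frac{\dd}{\dd t}\int_{\uS}f^{-1}h^{p}\dd x
= p\int_{\uS}h\sigma_{k}\rho^{q-n}\dd x-p\,\eta(t)\int_{\uS}f^{-1}h^{p}\dd x=0 ,
\]
so $\int_{\uS}f^{-1}h^{p}(\cdot,t)\dd x\equiv\int_{\uS}f^{-1}h^{p}(\cdot,0)\dd x=:\mathcal V_{0}$ for all $t\in[0,T)$.

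For the \textbf{upper bounds} I would fix $t$ and pick $x_{0}\in\uS$ with $h(x_{0},t)=\max_{\uS}h(\cdot,t)=:h_{\max}(t)$. Then $\nabla h(x_{0},t)=0$, so by \eqref{Xh} the point $h_{\max}(t)x_{0}$ lies on $M_{t}$; since $M_{t}$ encloses the origin the segment from $0$ to it lies in $M_{t}$, and comparing support functions gives $h(x,t)\ge h_{\max}(t)\langle x,x_{0}\rangle_{+}$ on $\uS$. Inserting this into the conservation law,
\[
\mathcal V_{0}=\int_{\uS}f^{-1}h^{p}\dd x\ \ge\ \frac{h_{\max}(t)^{p}}{\max_{\uS}f}\int_{\uS}\langle x,x_{0}\rangle_{+}^{p}\dd x ,
\]
and since $\int_{\uS}\langle x,x_{0}\rangle_{+}^{p}\dd x$ is a positive constant depending only on $n$ and $p$, this yields $h_{\max}(t)\le C$. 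Because $M_{t}\subseteq B_{h_{\max}(t)}$ and $\rho\le h$ pointwise, the upper bounds in \eqref{h1} and \eqref{rho1} follow.

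For the \textbf{lower bounds} I would first record that $\eta(t)$ is uniformly bounded: its denominator is the fixed constant $\mathcal V_{0}$, while $\rho^{q-n}\le C$ (using $\rho\le C$ and $q-n\ge0$) and $\int_{\uS}h\sigma_{k}\dd x\le C$ — the latter because $M_{t}\subseteq B_{C}$ and $\int_{\uS}h\sigma_{k}\dd x$ is a monotone quermassintegral of $M_{t}$ (equivalently, an integration by parts using $\nabla_{i}\sigma_{k}^{ij}=0$ and $h\le C$ reduces it inductively to $\int_{\uS}h\dd x$). Hence $0<\eta(t)\le C$. Now let $x_{1}\in\uS$ realize $h(x_{1},t)=\min_{\uS}h(\cdot,t)=:h_{\min}(t)$; there $\nabla h(x_{1},t)=0$, so $\rho=h_{\min}(t)$ at $x_{1}$, and $\nabla_{ij}h(x_{1},t)\ge0$ forces $b_{ij}\ge h_{\min}(t)\delta_{ij}$, hence $\sigma_{k}\ge h_{\min}(t)^{k}$ by monotonicity of $\sigma_{k}$ and the normalization $\sigma_{k}(1,\dots,1)=1$. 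Substituting into \eqref{seq} and using $\eta\le C$ gives, in the a.e.\ (Lipschitz) sense,
\[
\frac{\dd}{\dd t}h_{\min}(t)\ \ge\ f_{\min}\,h_{\min}(t)^{\,k+2-p+q-n}-C\,h_{\min}(t),\qquad f_{\min}:=\min_{\uS}f>0 .
\]
The hypothesis $q-n<p-k-1$ says exactly that the exponent $\gamma:=k+2-p+q-n$ satisfies $\gamma<1$, so the right-hand side is strictly positive as soon as $h_{\min}(t)$ drops below $(f_{\min}/(2C))^{1/(1-\gamma)}$; an ODE comparison then gives $h_{\min}(t)\ge1/C$ for all $t$. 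Finally $B_{h_{\min}(t)}\subseteq M_{t}$ yields $\rho(u,t)\ge h_{\min}(t)\ge1/C$, completing \eqref{h1}--\eqref{rho1}.

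I expect the one genuinely non-formal step to be the uniform bound on $\eta(t)$, and inside it the control of $\int_{\uS}h\sigma_{k}\dd x$: everything else is either the miraculous cancellation built into \eqref{eta} or a one-point maximum-principle computation, whereas this term has to be tied back to the geometry of $M_{t}$ — its containment in a fixed ball, via monotonicity of quermassintegrals — before the argument closes. Once $\eta$ is under control the lower bound is a routine scalar ODE, the sign $\gamma<1$ being precisely where $q-n<p-k-1$ enters; note that the stronger hypothesis $k+1<q-n$ of Theorem \ref{thm1} plays no role here.
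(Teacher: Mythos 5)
Your proposal is correct and follows essentially the same route as the paper: conservation of $\int_{\uS} f^{-1}h^{p}\dd x$ (built into the choice of $\eta$), plugging the lower bound $h(x,t)\ge h_{\max}(t)\langle x,x_{0}\rangle_{+}$ into the conserved integral for the upper bound, bounding $\eta(t)$ through the monotonicity of the mixed volume/quermassintegral $\int_{\uS} h\sigma_{k}\dd x\le C h_{\max}^{k+1}$, and a maximum-principle ODE argument at the minimum point of $h$, where $q-n<p-k-1$ makes the exponent $k+2-p+q-n<1$. The only cosmetic difference is that the paper bounds $R_{t}=\max\rho$ first and deduces the bound on $h$, while you bound $h_{\max}$ first and deduce the bound on $\rho$; these are equivalent.
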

\begin{proof}
Let $J(t) = \int_{\uS} h^{p}\frac{1}{f(x)}\dd x$. We claim that $J(t)$ is unchanged along the flow $\eqref{feq}$. It is because
\begin{equation*}
\begin{split}
 J'(t) &= \int_{\uS}{ph^{p-1}}\frac{1}{f(x)}\partial_{t} h \dd x \\
&=   p\int_{\uS}h^{p-1}\frac{1}{f(x)}(\sigma_{k}h^{2-p}\rho^{q-n}f(x )- \eta(t)h) \dd x \\
& = 0.
\end{split}
\end{equation*}

For each $t \in [0,T)$, suppose that the maximum of radial function $\rho(\cdot,t)$ is attained at some $u_{t} \in \uS$.
 Let
$$R_{t} = \max_{u \in \uS}\rho (u,t) = \rho (u_{t},t)$$ for some $u_{t}\in \uS$.
By the definition of support function, we have
\begin{equation*}
h(x,t)\geq R_t\langle x,u_t \rangle, \quad \forall x\in\uS.
\end{equation*}
Denote the hemisphere containing $u_t$ by $S_{u_{t}}^{+} =\set{x\in\uS : \langle x,u_t \rangle > 0}$. Since $p>1$, we have
\begin{equation*}
\begin{split}
J(0) & = J(t) \geq \int_{S_{u_{t}}^{+}} h^{p}\frac{1}{f(x)}\dd x \\
 &  \geq \int_{S_{u_{t}}^{+}}R_{t}^{p} \langle x,u_t \rangle ^{p}\frac{1}{f(x)}\dd x \geq \frac{1}{f_{\max}} \int_{S_{u_{t}}^{+}}R_{t}^{p} \langle x,u_t \rangle ^{p}\dd x\\
   &=\frac{1}{f_{\max}}  \int_{S^{+}} R_{t}^{p} x_1 ^{p} \dd x,
 \end{split}
\end{equation*}
 where $S^{+}  =\set{x\in \uS : x_1\ >  0}$.

Denote $S_1 =\set{x\in \uS : x_1\geq 1/2}$, then
\begin{equation*}
\begin{split}
  J(0)
  \geq \frac{1}{f_{\max}}  \int_{S_1} R_{t}^{p} (\frac{1}{2}) ^{p} \dd x
  = \frac{1}{f_{\max}}  R_{t}^{p} (\frac{1}{2}) ^{p} |S_1|,
\end{split}
\end{equation*}
which implies that $R_{t}$ is uniformly bounded from above.

Now, we estimate the lower bound of $h$. First we explain that $\eta(t) $ is  bounded from above.

Since mixed volumes are monotonic increasing, see \cite[page 282]{Schneider}, we
have for each $t \in [0,T)$,
 \begin{equation}
 \label{v-bound}h_{\min}^{k+1}(t) \leq \dfrac{ \int_{\uS}h\sigma_{k} \dd x }{\omega_{n-1}}\leq   h_{\max}^{k+1}(t),
\end{equation}
here $h_{\min}(t) = \min_{x \in \uS} h(x,t)$ and $h_{\max}(t) = \max_{x \in \uS} h(x,t)$.

Now we recall the definition of  $\eta(t)  $ and notice that $J(t)$ is unchanged along the flow $\eqref{feq}$. From the upper bound of $h_{\max}(t)$ and $\eqref{v-bound}$, we have
 \begin{equation*}
\eta(t)  \leq c_{3}
 \end{equation*}
for some  positive constant  $c_{3}$ independent of $t$.

Suppose the minimum of $h$ is attained at a point $(y_{t}, t)$.  It follows that
\begin{equation*}
\sigma_{k}(y_{t}, t) \geq h_{\min}^{k}(t).
 \end{equation*}
Then in the sense of the lim inf of  difference quotient, we have
\begin{equation*}
 \frac{\partial h_{\min}(t)}{\partial t}  \geq
 f_{\min}h_{\min}^{k+2-p+q-n}- c_{3}h_{\min}.
 \end{equation*}

 The right hand of the above inequality is positive for $h_{\min}(t)$ small enough if $k+1-p+q-n < 0$.
The lower bound of $h_{\min}(t)$ follows from the maximum principle.

\end{proof}

\begin{remark}When  $\eta =1$, $J(t)$ is not unchange along the flow any more. The upper bound of $h$ can be estimated as follows.

Suppose the maximum of $h(x,t)$ is attained at a point $(x_{t}, t)$. At $(x_{t}, t)$,
\begin{equation*}
\sigma_{k}(x_{t}, t) \leq h_{\max}^{k}(t).
 \end{equation*}
 Hence, we have
 \begin{equation*}
 \frac{\partial h_{\max}(t)}{\partial t}  \leq
 f_{\max}h_{\max}^{k+2-p+q-n}- h_{\max}.
 \end{equation*}
 The right hand of the above inequality becomes negative for {$h_{\max}(t)$} large enough if $k+1-p+q-n < 0$.
The upper bound of $h_{\max}(t)$ follows. The lower bound $h(x,t)$ can be obtained  by using the same method for  the case
 $\eta(t) \neq 1 $.
 \end{remark}

By the equality $\rho^{2} = h^{2} + |\nabla h|^{2}$, we can obtain the gradient estimate of support function from Lemma \ref{C0-estimate}.

\begin{corollary}\label{cor3.2}
  Under the assumptions of Lemma \ref{C0-estimate}, we have
\begin{equation*}
  |\nabla h(x,t)| \leq C, \quad \forall (x,t) \in \mathbb{S}^{n-1} \times [0, T),
\end{equation*}
where $C$ is a positive constant depending only on constants in Lemma \ref{C0-estimate}.
\end{corollary}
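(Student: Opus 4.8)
The statement to prove is Corollary \ref{cor3.2}: a gradient bound $|\nabla h(x,t)| \le C$ on $\uS \times [0,T)$, given the $C^0$ bounds on $h$ and $\rho$ from Lemma \ref{C0-estimate}.

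\medskip

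The plan is to exploit the pointwise identity $\rho^2 = h^2 + |\nabla h|^2$ established in the Preliminaries (just before the statement of Lemma \ref{C0-estimate}), which holds for every $(x,t)$ because it is a purely geometric relation between the support function and radial function of the convex body $M_t$. First I would rearrange this identity to isolate the gradient term, writing
\begin{equation*}
  |\nabla h(x,t)|^2 = \rho(u,t)^2 - h(x,t)^2,
\end{equation*}
where $u = u(x,t)$ is the direction corresponding to $x$ under the relation $\rho(u)u = \nabla h(x) + h(x)x$. Since $\rho \le C$ uniformly in $t$ by \eqref{rho1} and $h \ge 1/C > 0$ (indeed $h \ge 0$ already suffices here) by \eqref{h1}, the right-hand side is bounded above by $C^2$ independent of $t$. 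Taking square roots yields $|\nabla h(x,t)| \le C$ with a constant depending only on the constants appearing in Lemma \ref{C0-estimate}. This is essentially the whole argument; no maximum principle or evolution equation is needed, since the bound is algebraic.

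\medskip

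The only point requiring a word of care is making sure the identity $\rho^2 = h^2 + |\nabla h|^2$ is applied at matching points: $h$ and $\nabla h$ are evaluated at $x \in \uS$, while $\rho$ is evaluated at the associated radial direction $u$, and the correspondence $x \leftrightarrow u$ is a diffeomorphism of $\uS$ for each fixed $t$ because $M_t$ is smooth and strictly convex. Thus for any $x$ there is a $u$ with $\rho(u,t)^2 = h(x,t)^2 + |\nabla h(x,t)|^2$, and since the bound $\rho(u,t) \le C$ holds for \emph{all} $u$, in particular it holds for this $u$. I do not anticipate any genuine obstacle: the content is entirely contained in the geometric identity together with the already-proven $C^0$ estimates, so the proof is a two-line computation.
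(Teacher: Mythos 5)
Your proposal is correct and is exactly the paper's argument: the paper derives Corollary \ref{cor3.2} directly from the identity $\rho^{2} = h^{2} + |\nabla h|^{2}$ combined with the uniform bounds of Lemma \ref{C0-estimate}. Your extra remark about evaluating $\rho$ at the direction $u$ corresponding to $x$ is a fine (and harmless) clarification, but otherwise the two proofs coincide.
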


 The uniform bounds of $\eta(t)$ can be derived from Lemma \ref{C0-estimate}.
\begin{lemma}\label{eta-estimate}   Under the assumptions of Lemma \ref{C0-estimate}, $\eta(t)$ is uniformly bounded above and below from zero.
\end{lemma}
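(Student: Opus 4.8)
\textbf{Proof proposal for Lemma \ref{eta-estimate}.}

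The plan is to bound the numerator and denominator of
\[
\eta(t)=\frac{\int_{\uS}\rho^{q-n}h\sigma_{k}\dd x}{\int_{\uS}f^{-1}h^{p}\dd x}
\]
separately, using only the $C^{0}$-bounds $1/C\le h,\rho\le C$ from Lemma \ref{C0-estimate} (hence also the gradient bound of Corollary \ref{cor3.2}) together with the fact, already established in the proof of Lemma \ref{C0-estimate}, that the denominator $J(t)=\int_{\uS}f^{-1}h^{p}\dd x$ is constant along the flow and satisfies $1/C\le J(t)=J(0)\le C$; in particular the denominator needs no further work. The content is therefore in controlling the numerator, and since $\rho^{q-n}$ is pinched between positive constants by \eqref{rho1} and $f$ is bounded above and below on the compact sphere, the only genuine quantity to estimate is $\int_{\uS}h\sigma_{k}\dd x$.

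For the numerator I would argue as follows. The upper bound is immediate: by the mixed-volume monotonicity inequality \eqref{v-bound} one has $\int_{\uS}h\sigma_{k}\dd x\le \omega_{n-1}h_{\max}^{k+1}\le C$, and combining this with $\rho^{q-n}\le C$ and $f^{-1}\le f_{\min}^{-1}$ gives
\[
\eta(t)\le \frac{C\, f_{\min}^{-1}\,\omega_{n-1}h_{\max}^{k+1}}{J(0)}\le c_{3},
\]
which is exactly the bound $\eta(t)\le c_{3}$ used already inside the proof of Lemma \ref{C0-estimate}. For the lower bound, the same inequality \eqref{v-bound} gives $\int_{\uS}h\sigma_{k}\dd x\ge \omega_{n-1}h_{\min}^{k+1}\ge 1/C>0$; together with $\rho^{q-n}\ge 1/C$ and $f^{-1}\ge f_{\max}^{-1}$ this yields
\[
\eta(t)\ge \frac{\omega_{n-1}\,f_{\max}^{-1}\,h_{\min}^{k+1}\,(1/C)^{\,\lvert q-n\rvert}}{J(0)}\ge \frac{1}{C'}>0,
\]
with $C'$ independent of $t$. (If one prefers to avoid $\sigma_{k}$-integrals altogether, an alternative is to use the divergence-type identity $\int_{\uS}h\sigma_{k}\dd x=\int_{\uS}\sigma_{k+1}\text{-type terms}$ or simply the crude bound coming from $b_{ij}=\nabla_{ij}h+h\delta_{ij}$ and the $C^{0}$ bounds, but the mixed-volume route is cleanest.)

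There is no real obstacle here: the lemma is essentially a bookkeeping consequence of Lemma \ref{C0-estimate}, the constancy of $J(t)$, and the elementary estimate \eqref{v-bound}. The only point requiring a word of care is that the exponent $q-n$ need not be an integer and could in principle be negative under the weaker hypothesis $0\le q-n<p-k-1$, but since \eqref{rho1} controls $\rho$ from both sides this causes no difficulty — one simply writes $\,(1/C)^{|q-n|}\le \rho^{q-n}\le C^{|q-n|}$. Assembling the four one-sided bounds (numerator above and below, denominator above and below) gives $1/C''\le\eta(t)\le C''$ uniformly in $t\in[0,T)$, completing the proof.
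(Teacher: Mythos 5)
Your argument is correct and follows essentially the same route as the paper: the constancy of $J(t)$ disposes of the denominator, while the mixed-volume inequality \eqref{v-bound} combined with the $C^{0}$ bounds on $h$ and $\rho$ from Lemma \ref{C0-estimate} pinches the numerator $\int_{\uS}\rho^{q-n}h\sigma_{k}\dd x$ from above and below, exactly as in the paper's proof. The only (harmless) slip is the insertion of the factors $f_{\min}^{-1}$ and $f_{\max}^{-1}$ into your displayed bounds — $f$ does not occur in the numerator of $\eta(t)$, and the denominator is already the fixed constant $J(0)$ — but removing them changes nothing in the argument.
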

\begin{proof}
From the proof of Lemma \ref{C0-estimate}, we know $\eta(t)$ has  positive upper bound and
 \begin{equation*}
 h_{\min}^{k+1}(t) \leq \dfrac{ \int_{\uS}h\sigma_{k} \dd x }{\omega_{n-1}}\leq   h_{\max}^{k+1}(t).
\end{equation*}
 Lemma \ref{C0-estimate} also gives
$$\int_{\uS}\rho ^{q-n}h\sigma_{k} \dd x \geq c\int_{\uS}h\sigma_{k} \dd x,$$
here $c$ is a constant depending on the bounds of $h(x,t)$.
  This together  with the uniform of $h(x,t)$
 implies that $\eta(t)$ is bounded from below.

\end{proof}

To obtain the long-time existence of the flow $\eqref{feq}$, we need to establish the uniform bounds on principal curvatures.
By Lemma \ref{C0-estimate}, for any $t\in[0,T)$, $h(\cdot,t)$
 always ranges within a bounded interval $I'=[1/C,C]$, where $C$ is the
 constant in Lemma \ref{C0-estimate}.
 First, we  give the estimates of $\sigma_{k}$.

\begin{lemma}\label{sigma-lower-estimate}   Under the assumptions of Lemma \ref{C0-estimate},
$$ \frac{1}{C} \leq \sigma_{k}(x,t) \leq C, \quad \forall (x,t) \in \mathbb{S}^{n-1} \times [0, T),$$
where $C$ is a positive constant independent of $t$.
\end{lemma}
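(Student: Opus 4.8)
The plan is to obtain the two-sided bound on $\sigma_k(x,t)$ by applying the maximum principle to the auxiliary quantity $\sigma_k$ (or equivalently to $F=N\sigma_k$) along the flow, using the $C^0$ and $C^1$ bounds already established in Lemma \ref{C0-estimate} and Corollary \ref{cor3.2}. First I would record that, since $\rho^2 = h^2+|\nabla h|^2$, Lemma \ref{C0-estimate} and Corollary \ref{cor3.2} give uniform positive upper and lower bounds for $h$, $\rho$, and hence for $N = f(x)h^{2-p}\rho^{q-n}$ (recall $f$ is smooth and positive on the compact sphere, so $f_{\min}\le f\le f_{\max}$). Thus it suffices to bound $\sigma_k$, or even just $F=N\sigma_k=\partial_t h + \eta h$. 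Note from the evolution equation \eqref{seq} that $F = \partial_t h + \eta(t) h$, and by Lemma \ref{eta-estimate} $\eta$ is bounded above and below away from zero; so a bound on $F$ is essentially a bound on $\partial_t h$.

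For the \emph{upper} bound, I would consider the function $Q = \log F$ (or $Q = F$ itself) and compute $\partial_t Q$ using the evolution equations in Lemma \ref{evolutions} together with the standard fact that $N\sigma_k^{ab}\nabla_{ab}$ is a (degenerate) elliptic operator. At an interior spatial maximum of $Q$ we have $\nabla Q = 0$ and $\nabla_{ab}Q \le 0$; substituting, the second-derivative terms involving $\nabla b_{ij}$ and the concavity term $\sigma_k^{ab,mn}\nabla_a b\,\nabla_b b$ have a favorable sign (since $\sigma_k^{1/k}$ is concave on the positive cone), and the remaining terms are controlled by the already-established bounds on $h$, $\nabla h$, $\rho$, $\eta$, and on $f$ and its first two derivatives. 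The gradient terms $\nabla_i\sigma_k\nabla_j N$ can be absorbed using $\nabla Q = 0$ to express $\nabla\sigma_k$ in terms of $\nabla N$, which is bounded. This yields a differential inequality of the form $\partial_t Q_{\max} \le C_1 - C_2 e^{Q_{\max}}$ (using $\sigma_k \ge c\,\sigma_k^{\text{something}}$ via the bounded-curvature-free Maclaurin-type inequality, or more directly $\partial_t F_{\max}\le C_1 - C_2 F_{\max}$ after using that the zeroth-order term in $\partial_t b_{ij}$ contributes $(k+1)N\sigma_k$ positively but is offset), whence $F$ — and therefore $\sigma_k$ — is bounded above by the maximum principle.

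For the \emph{lower} bound, I would instead work with $b^{ij}$, i.e.\ look at $\sigma_1(b^{ij}) = \sum_i b^{ii}$, which blows up precisely when some principal radius $\to 0$, i.e.\ when $\sigma_k\to 0$ is threatened; more efficiently, since we only want $\sigma_k$ bounded below (not all curvatures), I would apply the minimum principle to $\log F$ directly: at an interior spatial minimum the inequalities reverse, the concavity term now has the wrong sign but can be discarded in the right direction, and one obtains $\partial_t F_{\min}\ge -C_3 + C_4 F_{\min}^{-\alpha}$ for suitable $\alpha>0$ — or, cleanest of all, use that $h$ is uniformly bounded to get $\sigma_k \ge c>0$ at the spatial minimum of $h$ and then propagate. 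Actually the slick argument is: at the point where $\sigma_k$ attains its spatial minimum, combine $\nabla\sigma_k=0$, $\nabla^2\sigma_k\ge 0$ with the evolution equation for $\sigma_k$ (contract the $\partial_t b_{ij}$ equation with $\sigma_k^{ij}$) to get $\partial_t (\sigma_k)_{\min} \ge -C(\sigma_k)_{\min}+\text{lower order}$, again closing via the maximum principle since all other quantities are controlled. The main obstacle is bookkeeping in the evolution of $F$: organizing the many terms from Lemma \ref{evolutions} so that the genuinely dangerous ones (the gradient-of-$\sigma_k$ terms and the $\sigma_k^{ab,mn}$ term) are handled by the critical-point relations and the concavity of $\sigma_k^{1/k}$, while everything else is folded into a constant depending only on the $C^1$ data — this is where condition $0\le q-n<p-k-1$ is implicitly used to keep the exponents of $h$ and $\rho$ harmless.
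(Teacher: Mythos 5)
There is a genuine gap: the maximum principle applied to $F$, $\log F$, or $\sigma_k$ itself does not close at this stage of the argument, and the paper's proof uses a different (and essential) normalization. Writing $F=N\sigma_k$ with $N=f h^{2-p}\rho^{q-n}$, the evolution equation is
\begin{equation*}
\partial_t F \;=\; N\sigma_k^{ij}\nabla_{ij}F \;+\;(q-n+2-p)\frac{F^2}{h}\;-\;(q-n+k+2-p)\eta F\;+\;N F\,\sigma_k^{ij}\delta_{ij}.
\end{equation*}
The last term is a positive multiple of $\sigma_{k-1}$, which is \emph{not} controlled by $\sigma_k$ together with the $C^0$/$C^1$ data (one principal radius can blow up while $\sigma_k$ stays bounded); curvature bounds are only proved in Lemma \ref{C2-estimate-2}, which itself uses the present lemma, so invoking them here would be circular. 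Hence your claimed inequality $\partial_t F_{\max}\le C_1-C_2F_{\max}$ does not follow. The same problem defeats your other devices: $\nabla N$ and $\nabla_{ij}N$ contain $\nabla\rho$ and second derivatives of $h$, i.e.\ the entries of $b_{ij}$, so ``absorbing $\nabla\sigma_k$ by the bounded quantity $\nabla N$'' is not justified; in the contracted equation $\partial_t\sigma_k=\sigma_k^{ij}\nabla_{ij}F+\sigma_k^{ij}\delta_{ij}F-k\eta\sigma_k$ the elliptic operator acts on $F$, not on $\sigma_k$, so the critical-point relations at a spatial minimum of $\sigma_k$ give nothing; and the inequality $\sigma_k\ge h_{\min}^k$ holds only at the spatial minimum point of $h$, so it cannot be ``propagated'' to a global lower bound. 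For the lower bound on $F$ itself the reaction term is of the form $-cF^2-CF+(\text{positive term}\sim F^{2-1/k})$, which only prevents $F$ from vanishing in finite time, not from decaying to $0$ as $t\to\infty$.

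The paper's proof avoids all of this by applying the maximum principle to the quotient $F/h=f h^{1-p}\rho^{q-n}\sigma_k$. In the evolution of $F/h$ the dangerous terms cancel: using $\partial_t h=F-\eta h$ and $N\sigma_k^{ij}\nabla_{ij}h=kF-Nh\,\sigma_k^{ij}\delta_{ij}$, the $\sigma_k^{ij}\delta_{ij}$ contributions from $\partial_t F$ and from $\partial_t h$ drop out, leaving
\begin{equation*}
\partial_t\Bigl(\frac{F}{h}\Bigr)-N\sigma_k^{ij}\nabla_{ij}\Bigl(\frac{F}{h}\Bigr)
=(q-n+k+1-p)\,\frac{F}{h}\Bigl(\frac{F}{h}-\eta(t)\Bigr)
+2\frac{N}{h}\sigma_k^{ij}\nabla_i\Bigl(\frac{F}{h}\Bigr)\nabla_j h,
\end{equation*}
a Riccati-type reaction whose coefficient $q-n+k+1-p$ is negative by hypothesis. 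Since $\eta$ is uniformly bounded above and below (Lemma \ref{eta-estimate}), the maximum principle immediately gives uniform two-sided bounds on $F/h$, and then on $\sigma_k$ via the $C^0$ bounds of Lemma \ref{C0-estimate}; no concavity of $\sigma_k^{1/k}$, no curvature bounds, and no second derivatives of $f$ are needed. If you want to salvage your argument, you must either find this cancellation (i.e.\ work with $F/h$ or an equivalent normalization) or produce some other mechanism that controls the $\sigma_k^{ij}\delta_{ij}$ and $\nabla N$, $\nabla^2N$ terms without curvature estimates.
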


\begin{proof}
Recall that $F = h^{2-p}\rho^{q-n}\sigma_{k}f(x)$. According to   Lemma \ref{C0-estimate}, uniform bounds of $\sigma_{k}$ will follow from those of
$F$.

First, we compute the evolution equation of $F$. Since
\begin{equation*}
\begin{split}
\partial_{t}\sigma_{k} & =\sigma_{k}^{ij}\partial_{t}(\nabla_{ij}h + \delta_{ij}h) \\
&=  \sigma_{k}^{ij}\nabla_{ij}(\partial_{t}h) +  \sigma_{k}^{ij}\delta_{ij}\partial_{t}h\\
&= \sigma_{k}^{ij}\nabla_{ij}F - {\eta(t)}\sigma_{k}^{ij}\nabla_{ij}h + \sigma_{k}^{ij}\delta_{ij}F - {\eta(t)}\sigma_{k}^{ij}\delta_{ij}h\\
&= \sigma_{k}^{ij}\nabla_{ij}F + \sigma_{k}^{ij}\delta_{ij}F - k\sigma_{k}{\eta(t)}.
\end{split}
\end{equation*}
The above equality and  $\eqref{req}$ gives that
\begin{equation*}
\begin{split}
\partial_{t} F &= (2-p)f(x) h^{1-p}\rho^{q-n}{\sigma_k} \partial_{t}h + (q-n)f(x) h^{2-p}{\rho^{q-n-1}\sigma_k}\partial_{t}\rho + f(x) h^{2-p}\rho^{q-n}\partial_{t}\sigma_{k}\\
& = (q-n+2-p)\frac{F^{2}}{h} - (q-n+k+2-p)\eta(t)F + NF\sigma_{k}^{ij}\delta_{ij}{+N\sigma_k^{ij}\nabla_{ij}F}.
\end{split}
\end{equation*}

Next, we consider the evolution equation of $\frac{F}{h} = f(x)h^{1-p}\rho ^{q-n} \sigma_{k}$.
\begin{equation}\label{Feq}
\begin{split}
&\partial_{t} \left( \frac{F}{h}\right) - N\sigma_{k}^{ij}\nabla_{ij}\left( \frac{F}{h}\right) \\
& = \frac{1}{h}(\partial_{t} F - N\sigma_{k}^{ij}\nabla_{ij}F) - \frac{F}{h^{2}}(\partial_{t} h - N\sigma_{k}^{ij}\nabla_{ij}h) +
2\frac{N}{h}\sigma_{k}^{ij}\nabla_{i}\left( \frac{F}{h}\right)\nabla_{j}h\\
& = (q-n+k+1-p)\left( \frac{F}{h}\right)^{2} - (q-n+k+1-p)\eta(t)\left( \frac{F}{h}\right) \\
& + 2\frac{N}{h}\sigma_{k}^{ij}
\nabla_{i}\left( \frac{F}{h}\right)\nabla_{j}h.
\end{split}
\end{equation}
Since $\eta(t)$ is uniformly bounded and $q-n+k+1-p < 0$, we have $\frac{F}{h}$ is bounded from below and above. The uniform bounds on $\sigma_{k}$ follow.
\end{proof}

Now we can derive the  upper bounds of principal curvatures $\kappa_{i}(x,t)$ of $M_t$
for $i=1,\cdots, n-1$.

\begin{lemma}\label{C2-estimate-2}
   Under the assumptions of Theorem \ref{thm1},
we have $$ \kappa_{i} \leq C, \quad \forall (x,t) \in \mathbb{S}^{n-1} \times [0, T),$$
where $C$ is a positive constant independent of $t$.
\end{lemma}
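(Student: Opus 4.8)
The plan is to bound the largest principal curvature $\kappa_{\max}$ by applying the maximum principle to an auxiliary function built from $\kappa_{\max}$ (equivalently, the largest eigenvalue of $b^{ij}$) together with the support function $h$ and the quantity $F/h$, both of which are already pinched between positive constants by Lemma \ref{C0-estimate} and Lemma \ref{sigma-lower-estimate}. Since the flow is degenerate parabolic in the curvature and the pointwise curvature speed $N = f(x)h^{2-p}\rho^{q-n}$ depends on $h$ and $\rho$ (both controlled) but is only bounded above and below away from zero, one must be careful: I would work with $\lambda_{\max}(b^{ij})$, the largest eigenvalue of the inverse second fundamental form, i.e.\ with $1/\kappa_{\min}(\text{radii}) = \max_i b^{ii}$ in a frame diagonalizing $b_{ij}$. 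Concretely, I would consider
\begin{equation*}
W(x,t) = \log \lambda_{\max}\bigl(b^{ij}\bigr) - A\log h + B\frac{F}{h}
\end{equation*}
for constants $A,B>0$ to be chosen, or a variant replacing $\log h$ by $\log\bigl(h - \varepsilon\bigr)$ / by $|X|^2=\rho^2$ as in \cite{Ivaki2019,ShengYi2020}; the precise combination is dictated by which cross terms need to be absorbed. At an interior spatial maximum of $W$ we may rotate coordinates so that $b_{ij}$ is diagonal and $b^{11}=\lambda_{\max}$; then the evolution equation for $b^{ij}$ from Lemma \ref{evolutions}, applied to the $11$ component, gives the leading terms.

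The key steps, in order, are: (1) Use Lemma \ref{evolutions} to write $\partial_t \log b^{11} - N\sigma_k^{ab}\nabla_{ab}\log b^{11}$, picking up the good term $-\,N\sigma_k^{ab}\nabla_a\log b^{11}\,\nabla_b\log b^{11}$ from the Hessian of the logarithm and the bad gradient term coming from $-Nb^{1l}b^{1s}(\sigma_k^{ab,mn}+2\sigma_k^{am}b^{nb})\nabla_l b_{ab}\nabla_s b_{mn}$; by Codazzi ($\nabla_l b_{ab}$ is symmetric in all three indices on $\uS$) and concavity of $\sigma_k^{1/k}$, the $\sigma_k^{ab,mn}$ contribution has a favorable sign and the remaining $2\sigma_k^{am}b^{nb}$ term can be dominated by the gradient term from the log plus a multiple of $\lambda_{\max}$, exactly as in the standard Christoffel--Minkowski $C^2$ estimates. (2) Handle the zeroth-order term $-(k+1)N\sigma_k b^{1a}b^{1a} = -(k+1)N\sigma_k (b^{11})^2$: this is the decisive \emph{good negative} term, quadratic in $\lambda_{\max}$, and it is where the hypothesis $k+1<q-n<p-k-1$ enters, because it must beat the positive contributions coming from the derivatives of $N=f h^{2-p}\rho^{q-n}$ after differentiating twice — these produce terms like $h^{-1}\nabla_a\log h$, $\rho^{-1}\nabla_a\log\rho$ times $\nabla_b b_{11}$, i.e.\ terms that are at most linear in $\lambda_{\max}$ once Corollary \ref{cor3.2} and Lemma \ref{sigma-lower-estimate} are invoked. (3) Compute $\partial_t(F/h) - N\sigma_k^{ij}\nabla_{ij}(F/h)$ from \eqref{Feq}; since $q-n+k+1-p<0$ and $\eta$, $F/h$, $h$ are all bounded, the only term that matters at the maximum of $W$ is the gradient term $2(N/h)\sigma_k^{ij}\nabla_i(F/h)\nabla_j h$, which, with $B$ chosen large, supplies a controlled amount of $N\sigma_k^{ij}|\nabla b|$-type quantity to cancel the residual bad gradient terms from step (1). (4) Use $-A\log h$ (with $A$ large depending on the constants from Lemmas \ref{C0-estimate}, \ref{sigma-lower-estimate}, and $f$) to dominate the linear-in-$\lambda_{\max}$ terms that are not gradient terms. (5) Conclude: at a spatial-temporal maximum of $W$ we get $0 \le \partial_t W \le C_1 - C_2 \lambda_{\max}$ with $C_2>0$, whence $\lambda_{\max}$ — and therefore $\kappa_{\max} = 1/\lambda_{\min}(\text{radii})$, wait, rather the largest principal curvature, which is $1/(\text{smallest radius})=\lambda_{\max}(b^{ij})$ — is bounded, giving $\kappa_i \le C$ for all $i$.

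The main obstacle I anticipate is bookkeeping the first- and second-order terms generated by differentiating $N = f(x)h^{2-p}\rho^{q-n}$ twice and showing that all the ``bad'' pieces are either (a) gradient terms absorbable by the good $-N\sigma_k^{ab}\nabla_a\log b^{11}\nabla_b\log b^{11}$ term plus the $F/h$ gradient term, or (b) at most linear in $\lambda_{\max}$ and hence beaten by the quadratic good term $-(k+1)N\sigma_k(b^{11})^2$ for $\lambda_{\max}$ large; getting the inequalities on $p,q,k$ to be precisely $k+1<q-n<p-k-1$ to come out correctly is the delicate point, and this is presumably exactly why the authors remark that the weaker condition $0\le q-n<p-k-1$ suffices for the $C^0$ bound but the stronger one is needed here. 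A secondary subtlety is that $\sigma_k^{ij}$ is only positive-definite on the Garding cone; positivity of the $b_{ij}$ is preserved along the flow (strict convexity, needed to even speak of $b^{ij}$ and to use concavity of $\sigma_k^{1/k}$), so one should note this is maintained up to time $T$, which follows from the lower curvature bound obtained separately (or is part of the continuation of the flow); I would flag that the lower bound on $\kappa_i$, hence preservation of convexity, is treated in the subsequent lemma.
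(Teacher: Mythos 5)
There is a genuine gap in your plan, and it lies exactly at step (2), the power counting of the ``bad'' terms. In the evolution equation for $b^{11}$ from Lemma \ref{evolutions}, the terms generated by differentiating the anisotropic factor appear as $-(b^{11})^{2}\bigl(\sigma_{k}\nabla_{11}N+2\nabla_{1}\sigma_{k}\nabla_{1}N\bigr)$, i.e.\ they are multiplied by $(b^{11})^{2}$ and are therefore \emph{quadratic} in $\lambda_{\max}$ --- the same order as the good term $-(k+1)N\sigma_{k}(b^{11})^{2}$ --- not ``at most linear'' as you claim. In particular $\nabla_{11}N$ contains $f_{ss}h^{2-p}\rho^{q-n}$ and $f_{s}^{2}$-, $h_{s}^{2}$-type pieces with no smallness whatsoever, so no choice of large constants $A,B$ in $W=\log\lambda_{\max}(b^{ij})-A\log h+B\,F/h$ can make the zeroth-order quadratic term beat them: whether the bracket is positive is a genuine structural condition on $f$. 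This is precisely why the hypothesis ${\rm (A)}$ is imposed, and your proposal never invokes it; as written, the argument cannot close, and the role you assign to $k+1<q-n<p-k-1$ (crude domination of linear terms) is not how those inequalities actually enter.

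The paper's proof works with the test function $b^{11}/h$ (no $\log$, no $F/h$ correction). After applying the inverse-concavity inequality of $\sigma_{k}^{1/k}$ from Urbas, $(\sigma_{k}^{ij,mn}+2\sigma_{k}^{im}b^{nj})\nabla_{1}b_{ij}\nabla_{1}b_{mn}\ge\frac{k+1}{k}(\nabla_{1}\sigma_{k})^{2}/\sigma_{k}$, and Cauchy--Schwarz on $\nabla_{1}\sigma_{k}\nabla_{1}N$, the whole problem reduces to showing positivity of $\nabla_{11}N-\frac{k}{k+1}(\nabla_{1}N)^{2}/N+(k+1)N=(k+1)N^{\frac{k}{k+1}}\bigl(N^{\frac{1}{k+1}}+(N^{\frac{1}{k+1}})_{ss}\bigr)$ along the great circle through the maximum point, up to terms carrying a factor $b_{11}=1/b^{11}$ (which are harmless when the curvature is large). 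The third-derivative terms $h_{ssm}h_{m}$ are eliminated using the critical-point identity $\nabla_{m}(b^{11}/h)=0$, the conditions $q-n>k+1$ and $p\ge 2$ are used to complete squares in the $f_{s}h_{s}b_{ss}$ and $f_{s}h_{s}$ cross terms, and what remains is exactly the expression in ${\rm (A)}$, whose positivity yields $\partial_{t}(b^{11}/h)\le -c_{1}(b^{11}/h)^{2}+c_{2}(b^{11}/h)$ and hence the bound. If you want to salvage your route, you would have to reproduce this arc-length computation (or an equivalent) and bring in ${\rm (A)}$; the auxiliary $F/h$ and $\log h$ terms are not the missing ingredient.
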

\begin{proof}
By rotation, we assume that the maximal eigenvalue of $\dfrac{b^{ij}}{h}$ at $t$  is attained at point $x_{t}$ in the direction
 of the unit vector $e_{1} \in T_{x_{t}} \uS$. We also choose orthonormal vector field such that $b^{ij}$ is diagonal.

 By the evolution equation of $b^{ij}$ in Lemma \ref{evolutions}, we have
\begin{equation*}
\begin{split}
& \partial_{t}\left(\frac{b^{11}}{h}\right) -  N\sigma_{k}^{ij}\nabla_{ij}\left(\frac{b^{11}}{h}\right)  \\
& = \frac{2}{h}N\eta(t)\sigma_{k}^{ij} \nabla_{i}\frac{b^{11}}{h}
\nabla_{j}h + \frac{N}{h^{2}} b^{11}\sigma_{k}^{ij}\nabla_{ij}h
- (k+1) \frac{N}{h}\sigma_{k}(b^{11})^{2} +  \frac{N}{h}\sigma_{k}^{ij}\delta_{ij}b^{11} \\
& - \frac{N}{h}(b^{11})^{2}(\sigma_{k}^{ij,mn} + 2\sigma_{k}^{im}b^{nj})\nabla_{1}b_{ij}\nabla_{1}b_{mn}\\
& -\frac{1}{h}(b^{11})^{2}( \nabla_{11}N\sigma_{k}  + 2\nabla_{1}\sigma_{k}\nabla_{1}N
) - \frac{b^{11}}{h^{2}}N\sigma_{k}+ \frac{2b^{11}}{h}{\eta(t)}\\
&= \frac{2}{h}N\sigma_{k}^{ij} \nabla_{i}\frac{b^{11}}{h}
\nabla_{j}h - (k+1) \frac{N}{h}\sigma_{k}(b^{11})^{2}  - \frac{N}{h}(b^{11})^{2}(\sigma_{k}^{ij,mn} + 2\sigma_{k}^{im}b^{nj})\nabla_{1}b_{ij}\nabla_{1}b_{mn}\\
& -\frac{1}{h}(b^{11})^{2}(\nabla_{11}N\sigma_{k}  + 2\nabla_{1}\sigma_{k}\nabla_{1}N)
+ (k-1) \frac{b^{11}}{h^{2}}N\sigma_{k}+ \frac{2b^{11}}{h}\eta(t).
\end{split}
\end{equation*}
According to inverse concavity of $(\sigma_{k})^{\frac{1}{k}}$, we obtain by  \cite{Urbas1991}
\beqs
(\sigma_{k}^{ij,mn} + 2\sigma_{k}^{im}b^{nj})\nabla_{1}b_{ij}\nabla_{1}b_{mn} \geq \frac{k+1}{k}\frac{(\nabla_{1}\sigma_{k})^{2}}{\sigma_{k}}.
\eeqs
On the other hand, by Schwartz inequality, the following inequality holds
\beqs
2|\nabla_{1}\sigma_{k}\nabla_{1}N| \leq \frac{k+1}{k}\frac{N(\nabla_{1}\sigma_{k})^{2}}{\sigma_{k}} + \frac{k}{k+1}\frac{\sigma_{k}(\nabla_{1}N)^{2}}{N}.
\eeqs
Hence,  we have at $(x_{t}, t)$
\begin{equation*}
\partial_{t}\frac{b^{11}}{h}\leq  -\frac{(b^{11})^{2}}{h}\sigma_{k}\left[\nabla_{11}N -
\frac{k}{k+1}\frac{(\nabla_{1}N)^{2}}{N} + (k+1)N  + (1-k)\frac{Nb_{11}}{h}\right]+ \frac{2b^{11}}{h}\eta(t).
\end{equation*}
Let $s$ be the arc-length of the great circle  passing through $x_{t}$ with the unit tangent vector $e_{1}$.
Notice that
\begin{equation*}
\nabla_{11}N -\frac{k}{k+1}\frac{(\nabla_{1}N)^{2}}{N} + (k+1)N = (k+1)N^{\frac{k}{k+1}}\left(N^{\frac{1}{k+1}} + (N^{\frac{1}{k+1}})_{ss}\right).
\end{equation*}
It is easy to compute
\begin{equation*}
\begin{split}
N_{s} & = f_{s} h^{2-p}\rho^{q-n} + (2-p)f h^{1-p}\rho^{q-n}h_{s} + (q-n)fh^{2-p}\rho^{q-n-2}(hh_{s} + h_{ss}h_{s})\\
N_{ss} & = f_{ss}h^{2-p}\rho^{q-n} + 2(2-p) h^{1-p}\rho^{q-n}f_{s} h_{s} + 2(q-n)h^{2-p}\rho^{q-n-2}f_{s} h_{s}b_{ss} \\
&+ 2(q-n)(2-p)fh^{1-p}\rho^{q-n-2}h^{2}_{s}b_{ss} + (2-p)(1-p)fh^{-p}\rho^{q-n}h_{s}^{2}\\
& + (q-n)(q-n-2)fh^{2-p}\rho^{q-n-4}b_{ss}^{2} h_{s}^{2} + (2-p)fh^{1-p}\rho^{q-n}{h_{ss}}\\
& + (q-n)f h^{2-p}\rho^{q-n-2}(b_{ss}^{2} - hb_{ss} + h_{s}^{2} + h_{ss m}h_{m}).
\end{split}
\end{equation*}

We have by direct computations
\begin{equation*}
\begin{split}
& 1+ N^{-\frac{1}{k+1}}\left(N^{\frac{1}{k+1}}\right)_{ss}  \\
 &= 1 + \frac{1}{k+1}N^{-1}N_{ss} - \frac{k}{(k+1)^{2}}N^{-2}N_{s}^{2}\\
&= 1 + \frac{1}{k+1}f_{ss}f^{-1}+ \frac{1}{k+1}2(2-p) h^{-1}f^{-1}f_{s} h_{s} +\frac{1}{k+1} 2(q-n)f^{-1}\rho^{-2}f_{s} h_{s}b_{ss} \\
&+ \frac{1}{k+1}2(q-n)(2-p)h^{-1}\rho^{-2}h^{2}_{s}b_{ss} + \frac{1}{k+1}(2-p)(1-p)h^{-2}h_{s}^{2}\\
& + \frac{1}{k+1}(q-n)(q-n-2)\rho^{-4}b_{ss}^{2} h_{s}^{2} + \frac{1}{k+1}(2-p)h^{-1}{h_{ss}}\\
& + \frac{1}{k+1}(q-n) \rho^{-2}(b_{ss}^{2} - hb_{ss} + h_{s}^{2} + h_{ss m}h_{m})\\
&- \frac{k}{(k+1)^{2}}f_{s}^{2}f^{-2} - \frac{k}{(k+1)^{2}} (2-p)^{2}h^{-2}h_{s}^{2} - \frac{k}{(k+1)^{2}} (q-n)^{2}\rho^{-4}b_{ss}^{2} h_{s}^{2}\\
& - \frac{k}{(k+1)^{2}} 2(2-p)f^{-1}h^{-1}f_{s} h_{s} - \frac{k}{(k+1)^{2}} 2(q-n)f^{-1}\rho^{-2}f_{s} h_{s}b_{ss} \\
& - \frac{k}{(k+1)^{2}} 2(2-p)(q-n)h^{-1}\rho^{-2}h_{s}^{-2}b_{ss}.\\
 &= 1 + \frac{1}{k+1}f_{ss}f^{-1}- \frac{k}{(k+1)^{2}}f_{s}^{2}f^{-2} + \frac{2-p}{k+1}h^{-1}{h_{ss}}\\
 & + \frac{2(2-p)}{(k+1)^{2}} h^{-1}f^{-1}f_{s} h_{s} + \frac{2(q-n)}{(k+1)^{2}}f^{-1}\rho^{-2}f_{s} h_{s}b_{ss}\\
 &+ \frac{2}{(k+1)^{2}} (2-p)(q-n)h^{-1}\rho^{-2}h^{2}_{s}b_{ss} + \frac{1}{(k+1)^{2}}(p-2)(p+k-1)h^{-2}h_{s}^{2}\\
 & + \frac{q-n}{(k+1)^{2}}(q-n-2k-2)\rho^{-4}b_{ss}^{2} h_{s}^{2} + \frac{1}{k+1}(q-n) \rho^{-2}(b_{ss}^{2} - hb_{ss} + h_{s}^{2} + h_{ss m}h_{m}).
\end{split}
\end{equation*}
At $(x_{t}, t)$, we have
\begin{equation*}
0 = \nabla_{m}\frac{b^{11}}{h} = \frac{\nabla_{m}b^{11}}{h} - \frac{b^{11}}{h^{2}}h_{m}.
\end{equation*}
Then
\begin{equation*}
\begin{split}
 h_{ss m}h_{m} & = (b_{ss } - h)_{m}h_{m} = b_{ssm }h_{m} - h_{m}^{2}\\
 &= -\frac{b_{ss }}{h}h_{m}^{2} - h_{m}^{2}.
\end{split}
\end{equation*}
From this we get
\begin{equation*}
\begin{split}
& b_{ss}^{2} - hb_{ss} + h_{s}^{2} + h_{ss m}h_{m} \\
&= b_{ss}^{2} - hb_{ss} + h_{s}^{2} -\frac{b_{ss }}{h}h_{m}^{2} - h_{m}^{2}\\
& = b_{ss}^{2} + h_{s}^{2} - \frac{b_{ss }}{h}\rho^{2} - \rho^{2} + h^{2}.
\end{split}
\end{equation*}
Now, we have
\begin{equation*}
\begin{split}
& 1+ N^{-\frac{1}{k+1}}\left(N^{\frac{1}{k+1}}\right)_{ss}  \\
 &= 1 + \frac{1}{k+1}f_{ss}f^{-1}- \frac{k}{(k+1)^{2}}f_{s}^{2}f^{-2} + \frac{2-p}{k+1}h^{-1}h_{ss}\\
 & + \frac{2(2-p)}{(k+1)^{2}} h^{-1}f^{-1}f_{s} h_{s} + \frac{2(q-n)}{(k+1)^{2}}f^{-1}\rho^{-2}f_{s} h_{s}b_{ss}\\
 &+ \frac{2}{(k+1)^{2}} (2-p)(q-n)h^{-1}\rho^{-2}h^{2}_{s}b_{ss} + \frac{1}{(k+1)^{2}}(p-2)(p+k-1)h^{-2}h_{s}^{2}\\
 & + \frac{q-n}{(k+1)^{2}}(q-n-2k-2)\rho^{-4}b_{ss}^{2} h_{s}^{2} \\
 & - \frac{1}{k+1}(q-n)+ \frac{1}{k+1}(q-n) \rho^{-2}(b_{ss}^{2} - \frac{b_{ss }}{h}\rho^{2} + h_{s}^{2} + h^{2}).
\end{split}
\end{equation*}
Since $1 > \rho^{-2} h_{s}^{2}$ and $q > n+k+1$, we have
\begin{equation*}
\begin{split}
& \frac{2(q-n)}{(k+1)^{2}}f^{-1}\rho^{-2}f_{s} h_{s}b_{ss} + \frac{q-n}{(k+1)^{2}}(q-n-2k-2)\rho^{-4}b_{ss}^{2} h_{s}^{2} + \frac{1}{k+1}(q-n) \rho^{-2}b_{ss}^{2}\\
& > \frac{2(q-n)}{(k+1)^{2}}f^{-1}\rho^{-2}f_{s} h_{s}b_{ss} + \frac{q-n}{(k+1)^{2}}(q-n-k-1)\rho^{-4}b_{ss}^{2} h_{s}^{2}\\
& = \frac{q-n}{(k+1)^{2}}\left((q-n-k-1)^{\frac{1}{2}}\rho^{-2} h_{s}b_{ss} + f_{s}f^{-1}\frac{1}{(q-n-k-1)^{\frac{1}{2}}} \right)^{2}\\
& - \frac{q-n}{(k+1)^{2}}\frac{1}{q-n-k-1}f_{s}^{2}f^{-2}.
\end{split}
\end{equation*}
We also have
\begin{equation*}
\begin{split}
&  \frac{2(2-p)}{(k+1)^{2}} h^{-1}f^{-1}f_{s} h_{s}  +  \frac{1}{(k+1)^{2}}(p-2)(p+k-1)h^{-2}h_{s}^{2}\\
 & = \frac{(p-2)}{(k+1)^{2}}\left( (p+k-1)^{\frac{1}{2}}h^{-1}h_{s} - \frac{1}{(p+k-1)^{\frac{1}{2}}} f_{s}f^{-1} \right)^{2}\\
 & - \frac{(p-2)}{(k+1)^{2}}\frac{1}{p+k-1}f_{s}^{2}f^{-2}.
\end{split}
\end{equation*}
 Noticing that $p \geq 2$, we have
\begin{equation*}
\begin{split}
& 1+ N^{-\frac{1}{k+1}}\left(N^{\frac{1}{k+1}}\right)_{ss}  \\
 &\geq \frac{p+n-q+k-1}{k+1} + \frac{2-p-q+n}{k+1}h^{-1}b_{ss} + \frac{2}{(k+1)^{2}}(2-p)(q-n)\rho^{-2}h^{-1}b_{ss} h_{s}^{2}\\
 &+ \frac{1}{k+1}f_{ss}f^{-1} - \left(\frac{k}{(k+1)^{2}}  + \frac{1}{(k+1)^{2}} \frac{q-n}{q-n-k-1} + \frac{1}{(k+1)^{2}}\frac{p-2}{p+k-1}\right)f_{s}^{2}f^{-2}\\
 & = \frac{2-p-q+n}{k+1}h^{-1}b_{ss} + \frac{2}{(k+1)^{2}}(2-p)(q-n)\rho^{-2}h^{-1}b_{ss} h_{s}^{2}\\
 & + \frac{1}{k+1}f^{-1}\left( f_{ss} - \frac{1}{k+1} \left(k  +  \frac{q-n}{q-n-k-1}+ \frac{p-2}{p+k-1}\right)f_{s}^{2}f^{-1} + (p+k-1-q+n)f\right)
\end{split}
\end{equation*}
Since $f$ satisfies ${\rm (A)}$, we have
\begin{equation*}
\partial_{t} \frac{b^{11}}{h}  \leq  -\left(\frac{b^{11}}{h}\right)^{2}N\sigma_{k}( c_{f}h - c_{0}b_{11}) + \frac{2b^{11}}{h}{\eta(t)}.
\end{equation*}
Here $c_{f}$ is a positive constant depending on $f$ and $c_{0}$ is a positive constant depending on the uniform bounds of $h$ and $|\nabla h|$.
By the uniform bounds on $h$,  $f$, $\eta$ and $\sigma_{k}$, we conclude
\begin{equation*}
\partial_{t}\frac{b^{11}}{h} \leq  -c_{1}\left(\frac{b^{11}}{h}\right)^{2} + c_{2}\frac{b^{11}}{h}.
\end{equation*}
Here $c_{1}$  and $c_{2}$ are  positive constants independent of  $t$.
The maximum principle then gives the upper bound of  $b^{11} $ and the result follows.
\end{proof}

\begin{proof}[proof of Theorem 1]
Combining Lemma \ref{sigma-lower-estimate} and Lemma \ref{C2-estimate-2}, we see that
the principal curvatures of $M_{t}$ has uniform positive upper and lower bounds.
This together with Lemma \ref{C0-estimate} and Corollary \ref{cor3.2} implies that the
evolution equation \eqref{seq} is uniformly parabolic on any finite time
interval. Thus, the result of \cite{KS1980} and the standard
parabolic theory show that the smooth solution of \eqref{seq} exists for all
time.
And by these estimates again, a subsequence of $M_t$ converges in $C^\infty$ to
a positive, smooth, strictly convex hypersurface $M_\infty$ in $\R^n$.
\end{proof}

In general, the problem $\eqref{seq}$ does not
have any variational structure, we can not expect the convergence of the flow to a solution for all initial hypersurfaces. In the next section, we will
choose an special to obtain the existence of smooth solutions for equation $\eqref{seq}$ with $c=1$.

\section{Existence of solution}

Since $q-n+k+1-p < 0$, we can choose initial hypersurface satisfying
$\left( \frac{F}{h} -1\right)_{M_{0}} > 0$. We have proved that flow $\eqref{feq}$ exists for all time when  $\eta =1$.
We will use flow $\eqref{seq}$ with $\eta =1$ to obtain the convergence.

 By $\eqref{Feq}$, we have
\begin{equation*}
\begin{split}
&\partial_{t} \left( \frac{F}{h} -1\right) - N\sigma_{k}^{ij}\nabla_{ij}\left( \frac{F}{h} -1\right) \\
& = (q-n+k+1-p)\left( \frac{F}{h}-1\right)\frac{F}{h} + 2\frac{N}{h}\sigma_{k}^{ij}
\nabla_{i}\left( \frac{F}{h}\right)\nabla_{j}\left( \frac{F}{h}-1\right).
\end{split}
\end{equation*}
From the assumption about the initial data, the positivity of $\frac{F}{h}-1$ is preserved along the flow and
\begin{equation*}
\partial_{t}h  = F - h > 0
\end{equation*}
holds for all time. Since

Since $h$  is positive and bounded  from above and below, we have
\begin{equation*}
C \geq h(x,t) - h (x,0) = \int_{0}^{+\infty} (F - h) dt
\end{equation*}
 This implies that there exists a subsequence of times $t_j\to\infty$ such that
\begin{equation*}
 F({t_j}) - h({t_j}) \to 0 \text{ as } t_j\to\infty.
\end{equation*}

 By
passing to the limit, we obtain
\begin{equation*}
 \tilde{h}^{1-p}\widetilde{\rho}^{q-n} \widetilde{\sigma_{k}}(x) f(x)=  1\text{ on } \uS,
\end{equation*}
where $\widetilde{\sigma_{k}}$, $\tilde{h}$ and $\widetilde{\rho}$ are  the
$k$-th  elementary symmetric function for principal curvature radii, the support function  and radial function of $M_\infty$. The proof of
Corollary \ref{coreq} is complected.






\end{document}